\documentclass{llncs}
\usepackage[utf8]{inputenc}
\usepackage{hyperref}
\usepackage{verbatim}
\usepackage{amsmath}
\usepackage{amssymb}
\usepackage{mathtools}
\usepackage{caption,color}
\sloppy


\def\ind{\vbox{\hbox{$\bot$\kern-.6em$\bot$}\kern-.05em}}
\def\dep{\vbox{\hbox{$\top$\kern-.6em$\top$}\kern-.05em}}
\newcommand*{\normally}{\mathrel{\ooalign{$|$\hfil\cr\kern+1pt$\thicksim$}}} 
\newcommand*{\nnormally}{\mathrel{\ooalign{$|$\hfil\cr\kern+1pt$\thicksim$}\negthickspace \negthickspace /} } 

\def\A{\mathcal{A}}

\def\I{\mathcal{I}}

\def\F{\mathcal{F}}
\def\P{\mathcal{P}}

\def\J{\mathcal{J}}
\def\proj{\rho}

\title{
Transitive reasoning  with imprecise probabilities
}
\author{Angelo
  Gilio\inst{1} \and Niki Pfeifer\inst{2} \and  Giuseppe Sanfilippo\inst{3}}
\institute{
Department SBAI,
  University of Rome ``La Sapienza'', Italy
  \and 
   Munich Center for Mathematical Philosophy, LMU Munich, Germany
  \and
  Department of Mathematics and Computer Science,
University of Palermo, Italy  
}
\begin{document}
\maketitle
\begin{abstract}
We study probabilistically informative (weak) versions of transitivity,  by using suitable definitions of defaults and negated defaults, in the setting of coherence and imprecise probabilities. We represent  {$\mbox{p-consistent}$}  sequences  of defaults and/or negated defaults by  g-coherent  imprecise probability assessments on the respective sequences of conditional events. Finally, we prove the coherent probability propagation rules
  for Weak Transitivity and  the validity of selected
  inference patterns by proving the p-entailment for the associated
  knowledge bases.
\end{abstract}
\section{Motivation and outline}
While Transitivity is basic for reasoning, it does not hold in nonmonotonic reasoning systems. Therefore,
various patterns of Weak Transitivity were studied in the  literature (e.g., \cite{freund1991}).
In probabilistic
approaches, Transitivity is probabilistically non-informative, i.e.,
the premise probabilities, $p(C|B), p(B|A)$, do not constrain the
probability of the conclusion $p(C|A)$ (for instance, the extension $p(C|A)=z$ of the assessment  $p(C|B)=1, p(B|A)=1$   is coherent for any $z\in[0,1]$; see 
\cite{pfeifer09b,pfeifer10a}). 
In this paper, we study
probabilistically informative versions of Transitivity in the setting
of coherence (\cite{biazzo00,coletti02,gilio13}). Transitivity has also been  studied in \cite{bonnefon12,dubois93}; among other differences, in our approach we use imprecise probabilities in the setting of coherence, where conditioning events may have zero probability.\\ After introducing some  notions of coherence for set-valued probability assessments  
(Sect.~\ref{SEC:IP}), we present probabilistic interpretations of
defaults and negated defaults (Sect.~\ref{SEC:ProbKBENT}). We represent a sequence of  defaults and/or negated defaults (knowledge base) by an imprecise probability assessment on the associated 
 sequence of conditional events. Moreover, we generalize  definitions of p-consistency and p-entailment. In
Sect.~\ref{SEC:WT} we prove the coherent probability propagation
rules for Weak Transitivity (Theorem~\ref{THM:PROPWT}). We then
exploit Theorem~\ref{THM:PROPWT} to demonstrate the validity of
selected patterns of (weak) transitive inferences involving 
defaults and negated defaults by proving the p-entailment for the corresponding knowledge bases
(Sect.~\ref{SEC:WTIND}). 
\section{Imprecise probability assessments} 
\label{SEC:IP}
Given two events $E$ and  $H$, with $H\neq \bot$,  the \emph{conditional event} $E|H$ is defined as a three-valued logical entity which is \emph{true} if $EH$ (i.e., $E\wedge H$) is true, 
\emph{false} if $\neg EH$ is true, and  \emph{void} if $H$ is false. Given a finite sequence of $n\geq 1$ conditional events $\F=(E_1|H_1,\ldots ,E_n|H_n)$, we denote  by $\P$ \emph{any precise} probability assessment $\mathcal{P}=(p_1,\ldots,p_n)$
on  $\F$, where  $p_j=p(E_j|H_j)\in[0,1]$,  $j=1,\dots,n$. 
Moreover, we denote by $\Pi$ the set of \emph{all  coherent precise} assessments on $\F$. The coherence-based probabilistic approach has been adopted by many authors (see e.g., \cite{biazzo00,coletti02,gilio02,gilio13ins,GiSa14,pfeifer13,pfeifer09b}); in this paper, due to the lack of space, we do not insert the basic notions on coherence. 
We recall that when there are no logical relations among the events $E_1,H_1,\ldots, E_n,H_n$ involved in $\F$, that is   $E_1,H_1,\ldots, E_n,H_n$ are logically independent, then the set $\Pi$ associated with $\F$ is the whole unit hypercube $[0,1]^n$. 
If there are  logical relations, then  the set $\Pi$ \emph{could be} a strict subset of  $[0,1]^n$. As is well known $\Pi\neq \emptyset$; therefore, 
$\emptyset\neq \Pi\subseteq [0,1]^n$.
\begin{definition}\label{DEF:IA}
An \emph{imprecise, or set-valued, assessment} $\I$ on a  family of  conditional events   $\F$  is a (possibly empty) set of precise assessments $\P$ on $\F$.
\end{definition}
Definition \ref{DEF:IA},  introduced in  \cite{gilio98}, states that  an \emph{imprecise (probability) assessment} $\I$ on a given family  $\F$ of $n$ conditional events is just a (possibly empty)  subset of $[0,1]^n$.
Given an imprecise assessment $\I$ we denote by $\I^c$ the \emph{complementary imprecise assessement} of $\I$, i.e.  $\I^c=[0,1]^n\setminus \I$.

\begin{definition}\label{DEF:GCOH}
Let be given a sequence  of $n$ conditional events  $\F$. An imprecise assessment $\I\subseteq [0,1]^n$  on $\F$ is  \emph{g-coherent}  if and only if there exists  a coherent precise assessment  $\P$  on $\F$ such that $\P\in \I$.
\end{definition}
\begin{definition} 
Let  $\I$ be a  subset of $[0,1]^n$.  For each $j \in\{1,2,\ldots,n\}$, the projection $\proj_j(\I)$ of $\I$  onto the $j$-th coordinate,  is defined as 
\[
\proj_j(\I)=\{x_j\in [0,1]: p_j=x_j, \mbox{ for some }(p_1,\ldots,p_n)\in \I\}.
\]
\end{definition}
\begin{definition}
An imprecise assessment $\I$ on a sequence of $n$ conditionals event  $\F$  is  \emph{coherent}  if and only if,  for every  $j \in \{1,\ldots,n\}$ and for every  $x_j\in \proj_j(\I)$, there  exists a coherent precise assessment  $\P=(p_1,\ldots,p_n)$ on $\F$, such that $\P \in\I$ and  $p_j= x_j$.
\end{definition}
\begin{definition}\cite[Definition 2]{gilio98}
An imprecise assessment $\I$  on $\F$  is  \emph{totally coherent}  if and only if the following two conditions are satisfied: (i) $\I$ is non-empty;   (ii) if $\P\in \I$, then $\P$ is a coherent precise assessment on $\F$. 
\end{definition}

\begin{remark}
We observe that:
\[
\begin{array}{l}\label{REM:GTCOHERENCE}
\I \mbox{ is g-coherent }\; \Longleftrightarrow \; \Pi \cap \I \neq \emptyset
\; \Longleftrightarrow\;  \proj_j(\Pi \cap \I ) \neq \emptyset  , \; \forall j=1,\ldots,n\,;
\\
\I \mbox{ is coherent } \;\Longleftrightarrow \; \emptyset  \neq\proj_j(\Pi \cap \I )= \proj_j(\I), \; \forall j=1,\ldots,n\,;
\\
\I \mbox{ is totally coherent } \;\Longleftrightarrow \; \emptyset \neq  \Pi \cap \I =\I\,.
\end{array}
\]
Then, the following relations among the different notions of \emph{coherence} hold:\\
$\I$  totally coherent $\Rightarrow  \I $ coherent $\Rightarrow \I$  g-coherent\,.
\end{remark}

\begin{definition} 
Let  $\I$ be a non-empty subset of $[0,1]^n$.  For each sub-vector  $(j_1,j_2,\ldots,j_m)$ of $(1,2,\ldots,n)$, the projection $\proj_{(j_1,j_2,\ldots,j_m)}(\I)$ of $\I$
 onto the  coordinates  $(j_1,\ldots,j_m)$, with $1\leq m\leq n$, is defined as 
\[
\begin{array}{ll}
\proj_{(j_1,\ldots,j_m)}(\I)=\\
\{(x_{j_1},\ldots, x_{j_m})\in [0,1]^m: p_j=x_j, j=j_1,\ldots,j_m, \mbox{ for some }  (p_1,\ldots,p_n)\in \I\, \}.
\end{array}
\]
\end{definition}
\noindent Let $\I$ be an imprecise  assessment on the sequence  $\F=(E_1|H_1,E_2|H_2,\ldots ,E_n|H_n)$; moreover, let $E_{n+1}|H_{n+1}$ be a further conditional event and let $\J\subseteq [0,1]^{n+1}$ an imprecise assessment on $(\F ,E_{n+1}|H_{n+1})$. We say that $\J$ is an \emph{extension} of $\I$ to $(\F ,E_{n+1}|H_{n+1})$ iff $\proj_{(1,2,\ldots,n)}(\J)=\I$, that is:  
$(i)$ for every $(p_1,\ldots,p_n,p_{n+1})\in \J$, it holds that $(p_1,\ldots,p_n)\in \I$; $(ii)$ for every $(p_1,\ldots,p_n)\in \I$, there exists $p_{n+1}\in [0,1]$ such that  $(p_1,\ldots,p_n,p_{n+1})\in \J$.

\begin{definition}\label{DEF:GCOHEXT}
Let $\I$ be a g-coherent assessment on   $\F=(E_1|H_1,E_2|H_2,\ldots ,E_n|H_n)$; moreover, let $E_{n+1}|H_{n+1}$ be a further conditional event and let $\J$ be an extension of $\I$  to $(\F ,E_{n+1}|H_{n+1})$. We say that $\J$  is a \emph{g-coherent extension} of $\I$ if and only if $\J$ is g-coherent.
\end{definition}
\begin{theorem}
Given a g-coherent assessment $\I\subseteq [0,1]^n$ on $\F$, let  $E_{n+1}|H_{n+1}$ be a further conditional event.
Then, there exists a g-coherent extension $\J\subseteq [0,1]^{n+1}$  of $\I$ to the family  $(\F, E_{n+1}|H_{n+1})$.
\end{theorem}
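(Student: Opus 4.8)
The plan is to reduce the statement about imprecise (set-valued) assessments to the classical fact, valid for \emph{precise} coherent assessments, that every coherent precise assessment on $\F$ extends to a coherent precise assessment on $(\F, E_{n+1}|H_{n+1})$ (the fundamental theorem of de Finetti in the conditional setting). First I would recall, using the Remark, that $\I$ g-coherent means $\Pi\cap\I\neq\emptyset$, where $\Pi$ is the set of all coherent precise assessments on $\F$; fix a witness $\P=(p_1,\ldots,p_n)\in\Pi\cap\I$.

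Next I would invoke the extension theorem for precise assessments: since $\P$ is coherent on $\F$, there is at least one $p_{n+1}\in[0,1]$ (in fact a whole closed interval $[\alpha,\beta]$ of such values) for which $(p_1,\ldots,p_n,p_{n+1})$ is a coherent precise assessment on $(\F,E_{n+1}|H_{n+1})$. Pick one such $p_{n+1}$ and set $\P'=(p_1,\ldots,p_n,p_{n+1})$.

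Now I would construct the candidate extension explicitly, e.g.\ $\J=\{(q_1,\ldots,q_n,q_{n+1})\in[0,1]^{n+1} : (q_1,\ldots,q_n)\in\I\}=\I\times[0,1]$. One checks immediately that $\proj_{(1,\ldots,n)}(\J)=\I$ (both inclusions are trivial since the last coordinate ranges over all of $[0,1]$), so $\J$ is an extension of $\I$ in the sense of Definition~\ref{DEF:GCOHEXT}'s preamble. Moreover $\P'\in\J$ because its first $n$ coordinates form $\P\in\I$; and $\P'$ is a coherent precise assessment on $(\F,E_{n+1}|H_{n+1})$. Hence, by Definition~\ref{DEF:GCOH}, $\J$ is g-coherent, and therefore $\J$ is a g-coherent extension of $\I$.

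The only genuine content here is the appeal to the precise-case extension theorem; everything else is bookkeeping about projections. So the ``main obstacle'' is really just making sure the reader accepts that background result (stated or cited in the omitted coherence preliminaries), and choosing the extension $\J$ large enough (here, as large as possible, $\I\times[0,1]$) so that the projection condition is automatic while still containing a coherent point. If one preferred a ``tight'' extension one could instead take $\J=\bigcup_{\P\in\I}\{\P\}\times\Sigma_{\P}$, where $\Sigma_{\P}\subseteq[0,1]$ is the (nonempty) set of coherent extension values of $\P$; this is also an extension and is g-coherent by the same witness argument, but it requires knowing $\Sigma_{\P}\neq\emptyset$ for \emph{every} $\P\in\I$, which again is exactly the precise-case theorem. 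I would present the simple version with $\I\times[0,1]$.
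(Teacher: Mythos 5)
Your proposal is correct and follows essentially the same route as the paper: fix a coherent precise witness $\P\in\Pi\cap\I$, extend it to a coherent $(\P,p_{n+1})$ via the fundamental theorem for precise assessments, and take the product extension (the paper uses $\J=\I\times\Gamma$ for any $\Gamma$ meeting the coherence interval $[p',p'']$, of which your $\J=\I\times[0,1]$ is the simplest instance). Your explicit check of the projection condition is a minor bookkeeping addition the paper leaves implicit.
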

\begin{proof}
As $\I$ is g-coherent,  there exists a  coherent precise assessment $\P=(p_1,\ldots,p_n)$ on $\F$, with $\P\in \I$. Then, as is well known (FTP), there exists (a non-empty interval) $[p',p'']\subseteq [0,1]$ such that $(\P,p_{n+1})$ is a  coherent precise assessment on $\F\cup \{E_{n+1}|H_{n+1}\}$, for every  
$p_{n+1}\in [p',p'']$. Now, let be given any $\Gamma\subseteq [0,1]$ such that $\Gamma \cap [p',p'']\neq \emptyset$; moreover, let us consider the extension  $\J=\I \times \Gamma$. Clearly,  $(\P,p_{n+1})\in \J$ for every  $p_{n+1}\in \Gamma \cap [p',p'']$; moreover the assessment $(\P,p_{n+1})$ on  $(\F, E_{n+1}|H_{n+1})$ is  coherent for every  $p_{n+1}\in \Gamma \cap [p',p'']$. Finally, $\J$ is a g-coherent extension of $\I$ to $(\F, E_{n+1}|H_{n+1})$.
\end{proof}
Given a g-coherent assessment $\I$ on  a sequence of $n$ conditional events $\F$, for each coherent precise assessment  $\P$ on $\F$, with $\P\in \I$, we denote by $[\alpha_{\P},\beta_{\P}]$ the interval of coherent extensions of $\P$  to $E_{n+1}|H_{n+1}$; that is, the assessment  $(\P,p_{n+1})$ on $(\F,E_{n+1}|H_{n+1})$ is coherent if and only if
$p_{n+1}\in [\alpha_{\P},\beta_{\P}]$. Then, defining the set 
\begin{equation} \label{EQ:UEXT}
\begin{array}{ll}
\Sigma=\bigcup_{\P\in\Pi \cap \I}[\alpha_{\P},\beta_{\P}]\,,
\end{array}
\end{equation}
for every $p_{n+1}\in \Sigma$, the assessment $\I\times \{p_{n+1}\}$ is a g-coherent extension of $\I$ to $(\F,E_{n+1}|H_{n+1})$; moreover, 
 for every $p_{n+1}\in [0,1]\setminus \Sigma$, the extension $\I\times \{p_{n+1}\}$  of $\I$ to $(\F,E_{n+1}|H_{n+1})$ is  not g-coherent. Thus, denoting by $\Pi'$ the set of coherent precise  assessments on $(\F,E_{n+1}|H_{n+1})$, it holds that  
 $\Sigma$ is the projection onto  the $(n+1)$-th coordinate of the set $(\I\times [0,1]) \cap \Pi'  $,  that is
$\proj_{n+1}((\I\times [0,1]) \cap \Pi')=\Sigma$.
 We say that $\Sigma$ is the \emph{set of coherent extensions}  of the imprecise assessment  $\I$ on $\F$ to the  conditional event $E_{n+1}|H_{n+1}$.
\section{Probabilistic  knowledge bases and entailment}
\label{SEC:ProbKBENT}
The sentence  ``\emph{$E$ is a plausible consequence of  $H$},'' where $E$ is an event and $H$ is a not self-contradictory event, is a \emph{default}, which we denote by ${H\normally E}$. Moreover, we denote a  \emph{negated  default}, $\neg (H \normally E)$, by $H\nnormally E$ (it is not the case, that: $E$ is a plausible consequence of  $H$). We denote by $s$ a default  or a negated default, which are  defined in terms of probabilistic assessments as follows:
\begin{definition}
${H\normally E}$ holds iff $p(E|H)=1$.   ${H\nnormally E}$ holds iff $p(E|H)\neq 1$.
\end{definition}
Given two events $E$ and  $H$, with $H\neq \bot$, by  coherence $p(E|H)+p(\neg E|H)=1$ (which holds in general). 
Thus, the probabilistic interpretation of the following types of sentences
${H\normally E}$ (I),  ${H\normally \neg E}$ (II), ${H\nnormally E}$ (III), and  ${H\nnormally \neg E}$ (IV), can be represented in terms of imprecise assessments on $E|H$ (Table \ref{Table:BS}).
\begin{table}[!ht]
\vspace{-1em}
\centering %
\begin{tabular}{cccc}\hline 
 Type \quad & \quad Sentence   \quad  & \quad   Probabilistic interpretation \quad  & \quad     Assessment $\mathcal{I}$ on $E|H$ \\
\hline
I &$H\normally E$&  $p(E|H)=1$  & $\{1\}$  \\
II & $H\normally \neg E$&  $p(\neg E|H)=1$ & $\{0\}$\\
III & $H\nnormally E$&  $p(E|H)<1$  & $[0,1[$\\
IV & $H\nnormally \neg E$&  $p(\neg E|H)<1$  & $]0,1]$\\
 \hline 
\end{tabular}
\caption{Probabilistic interpretations of defaults  (types I and II)  and negated defaults  (types III and IV), and their respective (imprecise) assessments $\mathcal{I}$ on a conditional event $E|H$.}
\label{Table:BS} %
\vspace{-2em}
\end{table}
In this paper a knowledge base    $\mathcal{K}$ is defined as a (non-empty) finite sequence of defaults and negated defaults. 
Let 
${\mathcal{K}=({H_1 \normally E_1},\ldots,{H_n \normally E_n},{D_1 \nnormally C_1},\ldots, 
{D_m \nnormally C_m})}$ be a knowledge base, with $n+m\geq 1$. The pair $(\mathcal{F}_{\mathcal{K}}, \mathcal{I}_{\mathcal{K}})$, where
 $\mathcal{F}_{\mathcal{K}}$ is the ordered family of conditional events $(E_1|H_1,\ldots,E_n|H_n,C_1|D_1,\ldots,C_m|D_m)$
and  
$\mathcal{I}_{\mathcal{K}}$ is the imprecise assessment
 ${\bigtimes_{i=1}^{n}\{1\}\times \bigtimes_{j=1}^{m}[0,1[}$
on $\mathcal{F}_{\mathcal{K}}$.
Thus, we define our probabilistic representation  of  the knowledge base $\mathcal{K}$ by the corresponding pair $(\mathcal{F}_{\mathcal{K}}, \mathcal{I}_{\mathcal{K}})$. We now define the notion of p-consistency of a given  knowledge  base
 in terms of g-coherence.
\begin{definition}\label{DEF:PCONS}
A knowledge base $\mathcal{K}$ is 
\emph{p-consistent} if and only if the imprecise assessment $\mathcal{I}_{\mathcal{K}}$ on  $\mathcal{F}_{\mathcal{K}}$  is g-coherent. 
\end{definition}
By coherence,  any (non-empty) sub-sequence $\mathcal{S}$ of a p-consistent knowledge base $\mathcal{K}$ is also a p-consistent knowledge base.
We define the notion of p-entailment of a (negated) default from a  p-consistent  knowledge  base in terms of coherent extension of a g-coherent  assessment.
\begin{definition}\label{DEF:ENTAIL}
Let $\mathcal{K}$ be p-consistent.
$\mathcal{K}$  {\em p-entails}  ${A\normally B}$ (resp., $A\nnormally B$), denoted by 
$\mathcal{K}\models_p A\normally B$ (resp., $\mathcal{K}\models_p A\nnormally B$),  iff 
the (non-empty) set of coherent extensions  to $B|A$ of   $\mathcal{I}_{\mathcal{K}}$ on  $\mathcal{F}_{\mathcal{K}}$ is  $\{1\}$ (resp., a subset of $[0,1[$\;).
\end{definition}
\begin{theorem} Let $\mathcal{K}$ be p-consistent.
$\mathcal{K}\models_p {A\normally B}$ (resp., $\mathcal{K}\models_p {A\nnormally B})$,   iff
there exists a (non-empty) sub-sequence  $\mathcal{S}$ of $\mathcal{K}$: 
$\mathcal{S}\models_p {A\normally B}$ (resp., $\mathcal{S}\models_p {A\nnormally B})$.
\end{theorem}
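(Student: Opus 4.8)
The plan is to derive the whole equivalence from a single \emph{monotonicity} property of sets of coherent extensions. Fix the conditional event $B|A$ and, for any p-consistent knowledge base, write $\Sigma_{\mathcal{K}}$ for the set of coherent extensions of $\mathcal{I}_{\mathcal{K}}$ on $\mathcal{F}_{\mathcal{K}}$ to $B|A$ (the set $\Sigma$ introduced in Section~\ref{SEC:IP}), and likewise $\Sigma_{\mathcal{S}}$ for the set of coherent extensions of $\mathcal{I}_{\mathcal{S}}$ on $\mathcal{F}_{\mathcal{S}}$ to $B|A$. The key claim I would prove is that for every non-empty sub-sequence $\mathcal{S}$ of $\mathcal{K}$,
\[
\emptyset\neq\Sigma_{\mathcal{K}}\subseteq\Sigma_{\mathcal{S}}.
\]
Non-emptiness is immediate: p-consistency of $\mathcal{K}$ means $\mathcal{I}_{\mathcal{K}}$ is g-coherent, so by the extension theorem proved above it has a g-coherent extension to $(\mathcal{F}_{\mathcal{K}},B|A)$, whence $\Sigma_{\mathcal{K}}\neq\emptyset$; the same applies to $\mathcal{S}$, which is itself p-consistent since sub-sequences of p-consistent knowledge bases are p-consistent.

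For the inclusion I would argue pointwise. Let $z\in\Sigma_{\mathcal{K}}$; then there is a coherent precise assessment $\mathcal{P}\in\Pi\cap\mathcal{I}_{\mathcal{K}}$ on $\mathcal{F}_{\mathcal{K}}$ with $(\mathcal{P},z)$ coherent on $(\mathcal{F}_{\mathcal{K}},B|A)$. Let $\mathcal{P}'$ be the restriction of $\mathcal{P}$ to the components indexed by the conditional events of $\mathcal{F}_{\mathcal{S}}$. Since the restriction of a coherent assessment to a sub-family of conditional events is again coherent, $\mathcal{P}'$ is coherent on $\mathcal{F}_{\mathcal{S}}$ and $(\mathcal{P}',z)$ is coherent on $(\mathcal{F}_{\mathcal{S}},B|A)$. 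Moreover $\mathcal{I}_{\mathcal{K}}$ is a Cartesian product of intervals (a factor $\{1\}$ or $[0,1[$ for each component), so its projection onto the components of $\mathcal{F}_{\mathcal{S}}$ is exactly $\mathcal{I}_{\mathcal{S}}$, and therefore $\mathcal{P}'\in\mathcal{I}_{\mathcal{S}}$. Hence $z\in\Sigma_{\mathcal{S}}$, which proves the claim.

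Granting the claim, both directions follow at once. For ($\Leftarrow$): if $\mathcal{S}\models_p A\normally B$ for some non-empty sub-sequence $\mathcal{S}$ of $\mathcal{K}$, then $\Sigma_{\mathcal{S}}=\{1\}$, so $\emptyset\neq\Sigma_{\mathcal{K}}\subseteq\{1\}$ forces $\Sigma_{\mathcal{K}}=\{1\}$, i.e. $\mathcal{K}\models_p A\normally B$; symmetrically, $\mathcal{S}\models_p A\nnormally B$ gives $\Sigma_{\mathcal{S}}\subseteq[0,1[$, hence $\emptyset\neq\Sigma_{\mathcal{K}}\subseteq[0,1[$ and $\mathcal{K}\models_p A\nnormally B$. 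For ($\Rightarrow$) it suffices to take $\mathcal{S}=\mathcal{K}$, a non-empty sub-sequence of itself.

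The only step that requires genuine care is the preservation of coherence under restriction to a sub-family of conditional events; this is a standard structural property of the coherence notion, and I would cite it rather than reprove it. One should also note explicitly that passing from $(\mathcal{F}_{\mathcal{K}},B|A)$ to $(\mathcal{F}_{\mathcal{S}},B|A)$ is again a restriction to a sub-family, so that any logical relations $B|A$ may have with events of $\mathcal{F}_{\mathcal{K}}$ outside $\mathcal{F}_{\mathcal{S}}$ cause no difficulty. Everything else is bookkeeping about the product structure of $\mathcal{I}_{\mathcal{K}}$.
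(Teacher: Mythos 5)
Your proof is correct and follows essentially the same route as the paper: the key step in both is that restricting a coherent assessment $(\mathcal{P}',z)$ on $(\mathcal{F}_{\mathcal{K}},B|A)$ to the sub-family $(\mathcal{F}_{\mathcal{S}},B|A)$ preserves coherence, while the product structure of $\mathcal{I}_{\mathcal{K}}$ guarantees the restricted assessment lies in $\mathcal{I}_{\mathcal{S}}$. The only difference is presentational: you package this as the inclusion $\emptyset\neq\Sigma_{\mathcal{K}}\subseteq\Sigma_{\mathcal{S}}$, whereas the paper runs the same argument as a \emph{reductio ad absurdum}.
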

\begin{proof}
($\Rightarrow$) Trivially, by setting $\mathcal{S}=\mathcal{K}$.\\ 
($\Leftarrow$) Assume that $\mathcal{S}\models_p A\normally B$ (resp., $A\nnormally B$).  Then,  for every precise coherent assessment
$\mathcal{P}\in \mathcal{I}_{\mathcal{S}}$ on $\mathcal{F}_{\mathcal{S}}$, if the  extension $(\mathcal{P},z)$ on $(\mathcal{F}_{\mathcal{S}},B|A)$ is coherent, 
then  ${z=1}$  (resp., $z\neq 1$).
Let $\mathcal{P}'\in \mathcal{I}_\mathcal{K}$ be a coherent precise assessment on $\mathcal{F}_\mathcal{K}$. 
For \emph{reductio ad absurdum} we assume   that the extension $(\mathcal{P}',z)$ on $(\mathcal{F}_{\mathcal{K}},B|A)$ is  coherent with $z\in[0,1[$ (resp., $z=1$). Then, the sub-assessment $(\mathcal{P},z)$ of $(\mathcal{P}',z)$ on  $(\mathcal{F}_{\mathcal{S}},B|A)$  is coherent with $z\in[0,1[$ (resp., $z=1$): this  contradicts  ${\mathcal{S}\models_p A\normally B}$ (resp., ${\mathcal{S}\models_p A\nnormally B}$). Therefore, ${\mathcal{K}\models_p A\normally B}$ (resp., ${\mathcal{K}\models_p A\nnormally B}$).\qed
\end{proof}
A similar approach has been  developed in \cite[Definition 26]{coletti02}. We observe that if the knowledge base  $\mathcal{K}$ consists of defaults only, then  definitions \ref{DEF:PCONS} and \ref{DEF:ENTAIL} coincide with the notion of p-consistency and p-entailment, respectively, investigated from a coherence perspective in \cite{gilio13} 
(see also \cite{biazzo02,gilio13ins}). Moreover, p-entailment of the well known inference rules of the nonmonotonic System P has been studied in this context (e.g., \cite{coletti02,gilio02}, see also \cite{benferhat97,CoPV14}).
%
%
%
\begin{remark}\label{REM:CONJUGACY}
By Table \ref{Table:BS}  the probabilistic interpretation of $\mathcal{K}=({H_1 \normally E_1},\ldots,{H_n \normally E_n},$ ${D_1 \nnormally C_1},\ldots,
{D_m \nnormally C_m})$   can equivalently be represented by  $\mathcal{I}_{\mathcal{K}}={\bigtimes_{i=1}^{n}\{1\}\times \bigtimes_{j=1}^{m}]0,1]}$
on $\mathcal{F}_{\mathcal{K}}=(E_1|H_1,\ldots,E_n|H_n,$ $\neg C_1|D_1,\ldots,\neg C_m|D_m)$.
Definitions \ref{DEF:PCONS} and \ref{DEF:ENTAIL} can be rewritten accordingly. 
\end{remark}
\begin{example}
Given three logically independent events $A,B,C$, with $A\neq \bot$,  any assessment $(x,y)\in[0,1]^2$ on $(C|A,B|A)$ is of course coherent. Furthermore,   the extension $z=P(C|AB)$ of $(x,y)$ on $(C|A,B|A)$ is coherent if and only if  $z\in [z',z'']$, where (\cite{gilio02})
\begin{equation*}\label{EQ:Cautios}
\begin{small}
\begin{array}{ccc}
z'=\left\{
\begin{array}{ll}
\frac{x+y-1}{y}>0, & \mbox{ if } x+y>1,\\
0, & \mbox{ if }  x+y\leq 1,\\
\end{array}
\right.
&\quad 
z''=\left\{\begin{array}{ll}
\frac{x}{y}<1, & \mbox{ if } x<y,\\
1, & \mbox{ if }   x\geq y\,.
\end{array}
\right.
\end{array}
\end{small}
\end{equation*}
Then,  we have (see also \cite{coletti02,freund1991}):
${(A\normally C, A\normally B)\models_p A B\normally C}$  (Cautious Monotonicity);
${(A\nnormally C, A\nnormally \neg B )\models_p A B\nnormally C}$ (Rational Monotonicity).\\
\end{example}
\section{Weak Transitivity: Propagation of probability bounds }
\label{SEC:WT}
In this section we   compute the interval $[z',z'']$  of the coherent extensions ${z=p(C|A)}$ of any coherent  assessment $(x,y,t)\in[0,1]^3$ on $(C|B,B|A,A|A\vee B)$ to $C|A$, by applying the Algorithm 2 given in \cite{biazzo00}.  The algorithm's outputs $p_0$ and $p^0$ will be denoted by $z'$ and $z''$, respectively. 
\begin{remark}\label{REM:TOTCOHWT}
Let $A,B,C$ be logically independent events.
It can be proved that  the assessment $(x,y,t)$  on $\mathcal{F}=(C|B,B|A,A|A\vee B)$ is coherent for every $(x,y,t)\in [0,1]^3$, that is the imprecise assessment $\mathcal{I}=[0,1]^3$ on $\mathcal{F}$ is totally coherent. 
Also $\mathcal{I}=[0,1]^3$ on $\mathcal{F}'=(C|B,B|A,C|A)$ is totally coherent.\footnote{For proving total coherence of $\mathcal{I}$ on $\mathcal{F}$ (resp., $\mathcal{F}'$)  it is sufficient to check that the assessment $\{0,1\}^3$ on $\mathcal{F}$ (resp.,   $\mathcal{F}'$) is totally coherent (\cite[Theorem 7]{gilio98}), i.e, each of the eight vertices  of the unit cube is coherent. Coherence can be checked, for example, by applying Algorithm 1 of \cite{gilio98} or by the CkC-package \cite{capotorti09}.} 
\end{remark}
\paragraph{Computation of the lower probability bound  $z'$ on $C|A$}~\\
\emph{Input:}  $n=3$, $\F_n=(E_1|H_1,E_2|H_2,E_3|H_3)=(C|B,B|A,A|A\vee B)$, $\A_n=([\alpha_1,\beta_1],[\alpha_2,\beta_2],[\alpha_3,\beta_3])= ([x,x],[y,y],[t,t])$,  $E_{n+1}|H_{n+1}=C|A$.\\
\emph{Step 0.\,} The constituents  associated with  ${(\F_n,E_{n+1}|H_{n+1}})=(C|B,B|A,A|(A\vee B),C|A)$
and contained in $\mathcal{H}_0=\bigvee_{i=1}^{n+1}H_i= A \vee B$ are $C_1=AB C\,, C_2=AB  \neg C\,, C_3=A \neg B  C\,, C_4=A \neg B  \neg C\,, C_5=\neg ABC\,,$ and  $C_6=\neg A B  \neg C$.
We construct the following starting
 system with unknowns $\lambda_1,\ldots,\lambda_6,z=p_{n+1}$:

\begin{equation}\label{S_0'}
 \left\{
\begin{array}{lllllll}
\lambda_1+\lambda_3=z(\lambda_1+\lambda_2+\lambda_3+\lambda_4), \\
\lambda_1+\lambda_5=x(\lambda_1+\lambda_2+\lambda_5+\lambda_6), \\
\lambda_1+\lambda_2=y(\lambda_1+\lambda_2+\lambda_3+\lambda_4),\\
\lambda_1+\lambda_2+\lambda_3+\lambda_4=t(\lambda_1+\lambda_2+\lambda_3+\lambda_4+\lambda_5+\lambda_6), \\
\lambda_1+\lambda_2+\lambda_3+\lambda_4+\lambda_5+\lambda_6=1, \;\;
\lambda_i\geq 0,\;\; i=1,\ldots,6\,.
\end{array}
\right.
\end{equation}
\emph{Step 1.\,} We set $z=0$ in  System (\ref{S_0'}) and  obtain
\begin{equation}\label{S_0'z=0}
\begin{array}{lcl}
\left\{
\begin{array}{lllllll}
\lambda_1+\lambda_3=0, \\
\lambda_5=x(\lambda_2+\lambda_5+\lambda_6), \\
\lambda_2=y(\lambda_2+\lambda_4),\\
\lambda_2+\lambda_4=t, \\
\lambda_2+\lambda_4+\lambda_5+\lambda_6=1, \\
\lambda_i\geq 0,\; i=1,\ldots,6\,;
\end{array}
\right.
&
\quad \Longleftrightarrow \quad &
\left\{
\begin{array}{lllllll}
\lambda_1=\lambda_3=0, \\
\lambda_2=yt,\\
\lambda_4=t(1-y), \\
\lambda_5=x(yt+1-t), \\
\lambda_6=(1-t)(1-x)-xyt, \\
\lambda_i\geq 0,\; i=1,\ldots,6\,.
\end{array}
\right.
\end{array}
\end{equation}
As $(x,y,t) \in [0,1]^3$, we observe that: $yt\geq 0$, $t(1-t)\geq 0$, and  $x(yt+1-t)\geq 0$.  Thus, System (\ref{S_0'z=0})   is solvable  if and only if $xyt\leq (1-t)(1-x) $, that is 
$t(1-x+xy)\leq 1-x$. We distinguish two cases:  $(i)$ $t(1-x+xy)> 1-x$; $(ii)$ $t(1-x+xy)\leq 1-x$.  In Case~$(i)$,  System~(\ref{S_0'z=0})   is not solvable and---according to the algorithm---we proceed to  Step~2. In Case~$(ii)$,  System~(\ref{S_0'z=0})  is solvable and we go to Step~3. \\
\emph{Case $(i)$.} We take Step 2 and consider the following linear programming problem:
\emph{Compute $z'=\min(  \lambda_1+\lambda_3)$ subject to:}
\begin{equation}\label{Sigma'}
\left\{
\begin{array}{lllllll}
\lambda_1+\lambda_5=x(\lambda_1+\lambda_2+\lambda_5+\lambda_6), \;\;
\lambda_1+\lambda_2=y(\lambda_1+\lambda_2+\lambda_3+\lambda_4), \\
\lambda_1+\lambda_2+\lambda_3+\lambda_4=t(\lambda_1+\lambda_2+\lambda_3+\lambda_4+\lambda_5+\lambda_6), \\
\lambda_1+\lambda_2+\lambda_3+\lambda_4=1, \;\;
\lambda_i\geq 0,\; i=1,\ldots,6.
\end{array}
\right.
\end{equation}
As $t(1-x+xy)> 1-x\geq 0$, it holds that  $t>0$. In this case, 
the constraints in (\ref{Sigma'}) can be rewritten in the following way
\[
\begin{array}{lll}
\left\{
\begin{array}{lllllll}
\lambda_1+\lambda_5=x\big(y+\frac{1-t}{t}\big), \\
\lambda_1+\lambda_2=y, \\
\lambda_5+\lambda_6=\frac{1-t}{t}, \\
\lambda_3+\lambda_4=1-y, \\
\lambda_i\geq 0,\; i=1,\ldots,6,
\end{array}
\right.
& \quad \Longleftrightarrow \quad &
\left\{
\begin{array}{lllllll}
\lambda_5=xy+x\frac{1-t}{t}-\lambda_1, \\
\lambda_2=y-\lambda_1, \\
\lambda_6=\frac{1-t}{t}-xy-x\frac{1-t}{t}+\lambda_1,\\
\lambda_4=1-y-\lambda_3,\\
\lambda_i\geq 0,\; i=1,\ldots,6.
\end{array}
\right.
\end{array}
\] 
that is 
\begin{equation}\label{Sigma'1}
\left\{
\begin{array}{lllllll}
\max\big\{0,xy-\frac{(1-t)(1-x)}{t}\big\}\leq \lambda_1 \leq \min\big\{y,xy+x\frac{1-t}{t}\big\}, \\
\lambda_2=y-\lambda_1, \;\;
0\leq\lambda_3\leq 1-y, \;\;
\lambda_4=1-y-\lambda_3,\\
\lambda_5=xy+x\frac{1-t}{t}-\lambda_1, \;\;
\lambda_6=\frac{(1-t)(1-x)}{t}-xy+\lambda_1.
\end{array}
\right.
\end{equation}
As $t(1-x+xy)> 1-x\geq 0$, it holds that $xy-(1-x)(1-t)/t>0$. Thus, 
we obtain the minimum of $(\lambda_1+\lambda_3)$
subject to~(\ref{Sigma'1})  at $(\lambda_1',\lambda_3')={(xy-(1-t)(1-x)/t,0)}$.
The \emph{procedure stops} yielding as \emph{output}
$z'=\lambda_1'+\lambda_3'= xy-(1-t)(1-x)/t>0$.\\
\emph{Case $(ii)$.} We take Step~3 of the algorithm. We denote by $\Lambda$  and $\mathcal{S}$ the vector of unknowns $(\lambda_1,\ldots,\lambda_6)$ and the set of solution of System~(\ref{S_0'z=0}), respectively.  By recalling the conditioning events $H_1=B, H_2=H_4=A, H_3=A \vee B$, we consider the following linear functions and their maxima in $\mathcal{S}$:
\begin{equation}\label{EQ:PHI}
\begin{array}{l}
\Phi_{1}(\Lambda)=\sum_{r:C_r\subseteq B}\lambda_r= \lambda_1+\lambda_2+\lambda_5+\lambda_6, \\ \Phi_{2}(\Lambda)=\Phi_{4}(\Lambda)=\sum_{r:C_r\subseteq A}\lambda_r=\lambda_1+\lambda_2+\lambda_3+\lambda_4, \\ 
\Phi_{3}(\Lambda)=\sum_{r:C_r\subseteq A\vee B}\lambda_r=\lambda_1+\lambda_2+\lambda_3+\lambda_4+\lambda_5+\lambda_6\,, \\
 M_i=\max_{\Lambda \in \mathcal{S}} \Phi_i(\Lambda),\;\;  i=1,2,3,4\;.
\end{array}
\end{equation}
From System~(\ref{S_0'z=0}), we obtain:
$ \Phi_{1}(\Lambda)=yt+1-t$, $\Phi_{2}(\Lambda)=\Phi_{4}(\Lambda)=t$,  and $\Phi_3(\Lambda)=1,\;\; \forall \Lambda \in \mathcal{S}$. Then,  $M_1=yt+1-t$,  $M_2=M_4=t$, and $M_3=1$.
We consider two subcases: $t>0$; $t=0$.
If $t>0$, then $M_{4}>0$ and we are in case 1 of Step 3. Thus,  the \emph{procedure stops} and yields $z'=0$ as \emph{output}.  If $t=0$, then
$M_{1}>0,M_{3}>0$ and $M_{2}=M_{4}=0$. Hence, we are in case 3 of Step 3 with $J = \{2\}, I_0=\{2,4\}$ and the procedure restarts with Step 0, with $\F_n$ replaced by  $\F_J=(E_2|H_2)=(B|A)$ and $\A_n$ replaced by $\A_J=([\alpha_2,\beta_2])=([y,y])$.\\
\emph{(2\textsuperscript{nd}) Step $0$.\,} The constituents  associated with  $(B|A,C|A)$, contained in $A$, are
$ C_1=ABC, C_2=AB\neg C, C_3=A\neg BC, C_4={A\neg B\neg  C}$.
The starting system is
\begin{equation}\label{S_1'}
\left\{
\begin{array}{lllllll}
\lambda_1+\lambda_2=y(\lambda_1+\lambda_2+\lambda_3+\lambda_4), \;\;
\lambda_1+\lambda_3=z(\lambda_1+\lambda_2+\lambda_3+\lambda_4),\\
\lambda_1+\lambda_2+\lambda_3+\lambda_4=1,\;\; \lambda_i\geq 0,\;\; i=1,\ldots,4\,.
\end{array}
\right.
\end{equation}
\emph{(2\textsuperscript{nd}) Step 1.\,} We set $z=0$ in System (\ref{S_1'})
and obtain
\begin{equation}\label{S_1'z=0}
\left\{
\begin{array}{ll}
\lambda_2=y, \;\;
\lambda_1+\lambda_3=0, \;\;
\lambda_4=1-y,\;\;
\lambda_i\geq 0,\;\; i=1,\ldots,4\,.
\end{array}
\right.
\end{equation}
As $y \in [0,1]$,  System (\ref{S_1'z=0})  is always solvable. Thus, we go to the following:  \\
(2\textsuperscript{nd}) Step 3. 
We denote by $\Lambda$  and $\mathcal{S}$ the vector of unknowns $(\lambda_1,\ldots,\lambda_4)$ and the set of solution of System~(\ref{S_1'z=0}), respectively.  By recalling the conditioning events $H_2=A$ and $H_4=A$, we consider the following linear functions $\Phi_i(\Lambda)$:
$\Phi_2(\Lambda)=\Phi_4(\Lambda)=\sum_{r:C_r\subseteq A}\lambda_r=\lambda_1+\lambda_2+\lambda_3+\lambda_4$.
We set  $M_i=\max_{\Lambda \in \mathcal{S}} \Phi_i(\Lambda)$,  $i=2,4$. From System~(\ref{S_1'z=0}), we obtain: $\Phi_2(\Lambda)=\Phi_4(\Lambda)=1,\;\; \forall \Lambda \in \mathcal{S}$. Then, $M_2=M_4=1$.
We are in case 1 of Step 3 of the algorithm; then the \emph{procedure stops} and yields   $z'=0$ as \emph{output}. 

To summarize,  for any  $(x,y,t)\in[0,1]^3$ on $(C|B,B|A,A|(A\vee B))$, we have computed the  coherent lower bound $z'$ on $C|A$. In particular,  if $t=0$, then $z'=0$. Moreover, if $t>0$ and $t(1-x+xy)\leq 1-x$, that is $xy-(1-t)(1-x)/t\leq 0$, we also have  $z'=0$. Finally, if $t(1-x+xy)>1-x$, then ${z'=xy-(1-t)(1-x)/t}$.
\paragraph{Computation of the upper  probability bound  $z''$ on $C|A$.}~\\
\emph{Input} and \emph{Step 0} are  the same as  in the proof of $z'$.\\ 
\emph{Step 1.\,} We set $z=1$ in  System (\ref{S_0'}) and  obtain
\begin{equation}\label{S_0'z=1}
\begin{array}{lll}
\left\{
\begin{array}{ll}
\lambda_2+\lambda_4=0,  \\
\lambda_1+\lambda_5=x(\lambda_1+\lambda_5+\lambda_6), \\
\lambda_1=y(\lambda_1+\lambda_3),\;\;
\lambda_1+\lambda_3=t,  \\
\lambda_1+\lambda_3+\lambda_5+\lambda_6=1, \\
\lambda_i\geq 0,\; i=1,\ldots,6\,,
\end{array}
\right.
& \quad \Longleftrightarrow \quad &
\left\{
\begin{array}{lllllll}
\lambda_1=yt, \;\; \lambda_3=t(1-y), \\
\lambda_2=\lambda_4=0,\\
\lambda_5=x-xt+xyt-yt, \\
\lambda_6=(1-x)[1- t(1-y)], \\
\lambda_i\geq 0,\; i=1,\ldots,6\,.
\end{array}
\right.
\end{array}
\end{equation}
As $(x,y,t) \in [0,1]^3$, we observe that: $yt\geq 0$, $ t(1-y)\geq 0$, and $(1-x)[1-t(1-y)]\geq 0$.  
Thus, System (\ref{S_0'z=1})   is solvable  if and only if $
x+xyt-xt-yt\geq 0$, i.e., $t(x+y-xy)\leq x$.
We distinguish two cases: $(i)$ $x+xyt-xt-yt< 0$; $(ii)$ $x+xyt-xt-yt\geq 0$. In Case~$(i)$,  System~(\ref{S_0'z=1})   is not solvable and---according to the algorithm---we proceed to  Step~2. In Case~$(ii)$,  System~(\ref{S_0'z=1})  is solvable and we go to Step~3. 

\emph{Case $(i)$.\,} We take Step 2 and consider the following linear programming problem:
$
\mbox{\em Compute }z''=\max(  \lambda_1+\lambda_3),\;
\mbox{\em subject to the constraints in } (\ref{Sigma'}).
$
As $x+xyt-xt-yt< 0$, that is $t(x+y-xy)>x\geq 0$,  it holds that $t>0$. In this case,  the constraints in (\ref{Sigma'}) can be rewritten as in (\ref{Sigma'1}). 
Since $x+xyt-xt-yt< 0$, it holds that $x+xyt-xt<yt\leq y$. Thus, 
we obtain the maximum of $(\lambda_1+\lambda_3)$
subject to~(\ref{Sigma'1})  at $(\lambda_1^{''},\lambda_3^{''})=\left(
xy-x+x/t,1-y\right)$.
The \emph{procedure stops} and yields the following  \emph{output}: $z''= 1-y-x+xy+x/t=(1-x)(1-y)+x/t$.

\emph{Case $(ii)$.\,} We take Step~3 of the algorithm.
We denote by $\Lambda$  and $\mathcal{S}$ the vector of unknowns $(\lambda_1,\ldots,\lambda_6)$ and the set of solution of System~(\ref{S_0'z=1}), respectively. We consider the  functions  given in (\ref{EQ:PHI}). From System~(\ref{S_0'z=1}), we  obtain  $M_1=yt+1-t$,  $M_2=M_4=t$, and $M_3=1$. If $t>0$, then $M_{4}>0$ and we are in case 1 of Step 3. Thus,  the \emph{procedure stops} and yields $z''=1$ as \emph{output}.  If $t=0$, then
$M_{1}>0,M_{3}>0$ and $M_{2}=M_{4}=0$. Hence, we are in case 3 of Step 3 with $J = \{2\}, I_0=\{2,4\}$ and the procedure restarts with Step 0, with $\F_n$ replaced by  $\F_J=(E_2|H_2)=(B|A)$ and $\A_n$ replaced by $\A_J=([\alpha_2,\beta_2])=([y,y])$.\\
\emph{(2\textsuperscript{nd}) Step 0.\,} This is the same as the (2\textsuperscript{nd})  Step 0 in the proof of $z'$. \\
\emph{(2\textsuperscript{nd}) Step 1.\,} We set $z=1$ in  System (\ref{S_0'}) and  obtain
\begin{equation}\label{S_1'z=1}
\left\{
\begin{array}{ll}
\lambda_1=y,\;\;
\lambda_3=1-y, \;\;
\lambda_2+\lambda_4=0, \;\;
\lambda_i\geq 0,\;\; i=1,\ldots,4\,.
\end{array}
\right.
\end{equation}
As $y \in [0,1]$,  System (\ref{S_1'z=1})  is always solvable. Thus, we go to the following:  \\
\emph{(2\textsuperscript{nd}) Step 3.} Like in  (the 2\textsuperscript{nd})  Step 3 of the proof of $z'$,
we obtain $M_4=1$. Thus, the procedure stops and yields   $z''=1$ as output.

To summarize,  for any  assessment  $(x,y,t)\in[0,1]^3$ on $(C|B,B|A,A|(A\vee B))$, we have computed the  coherent upper probability bound $z''$ on $C|A$.  In particular,  if $t=0$, then $z''=1$. Moreover, if $t>0$ and $t(x+y-xy)\leq x$, that is $(x+y-xy)\leq \frac{x}{t}$ $\Longleftrightarrow$ $\frac{x}{t}-x-y+xy\geq 0$ $\Longleftrightarrow$
$(1-x)(1-y)+\frac{x}{t}\geq 1$, we also have  $z''=1$. Finally, if $t(x+y-xy)>x$, then $z''=(1-x)(1-y)+\frac{x}{t}$. This concludes the proof of  the following:
\begin{theorem}\label{THM:PROPWT}
Let $A,B,C$ be three logically independent events and $(x,y,t)\in[0,1]^3$ be a (coherent)  assessment on the family $\big(C|B,B|A,A|(A\vee B)\big)$. Then, the  extension $z=P(C|A)$ is coherent if and only if $z\in[z',z'']$, where 
\[
\begin{array}{ll}
[z',z''] =
\left\{
\begin{array}{ll}
[0,1], & t=0;\\
\left[\max\{0,xy-(1-t)(1-x)/t\}, \min\{1,(1-x)(1-y)+x/t\}
\right] \,, & t>0\,.\\
\end{array}
\right.
\end{array}
\]
\end{theorem}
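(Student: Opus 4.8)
The plan is to invoke Algorithm~2 of \cite{biazzo00}, the FTP-based procedure which, given a coherent assessment on a family $\F_n$ together with a further conditional event $E_{n+1}|H_{n+1}$, returns the exact interval $[p_0,p^0]$ of coherent values of $p(E_{n+1}|H_{n+1})$. Here one takes $\F_n=(C|B,B|A,A|(A\vee B))$ with assessed values $([x,x],[y,y],[t,t])$ and $E_{n+1}|H_{n+1}=C|A$. By Remark~\ref{REM:TOTCOHWT} every triple $(x,y,t)\in[0,1]^3$ is coherent on $\F_n$, so the algorithm applies throughout and the output interval is automatically non-empty, which is why the statement can be phrased without a further coherence hypothesis. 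The first step is to list the constituents (atoms in the algebra generated by $A,B,C$) contained in $\mathcal{H}_0=H_1\vee H_2\vee H_3=A\vee B$: logical independence leaves exactly the six atoms $ABC$, $AB\neg C$, $A\neg BC$, $A\neg B\neg C$, $\neg ABC$, $\neg AB\neg C$. With unknowns $\lambda_1,\dots,\lambda_6$ for their masses and $z=p(C|A)$, I would write the starting system $S_0$ stating that each conditional probability equals the corresponding ratio of atom-masses, together with normalization and nonnegativity.

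For the lower bound: set $z=0$ in $S_0$ and test solvability. The system reduces to explicit formulas for the $\lambda_i$ in terms of $x,y,t$, and solvability turns out to be equivalent to the single inequality $xyt\le(1-t)(1-x)$, i.e. $t(1-x+xy)\le 1-x$. If this fails, the algorithm passes to Step~2: minimise $\lambda_1+\lambda_3$ subject to the associated sub-system; rewriting those constraints as box constraints on $\lambda_1$ and $\lambda_3$ locates the minimum at $(\lambda_1,\lambda_3)=\big(xy-(1-t)(1-x)/t,\,0\big)$, giving $z'=xy-(1-t)(1-x)/t$. If the inequality holds, the algorithm passes to Step~3: compute the maxima $M_i$ over the solution set of the linear functionals $\Phi_i$ that sum atom-masses over atoms implying $H_i$. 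One finds $M_1=yt+1-t$, $M_2=M_4=t$, $M_3=1$. When $t>0$ the functionals relevant to $C|A$ are all positive (case~1 of Step~3), so $z'=0$; when $t=0$ one lands in case~3, which forces a recursion on the reduced family $(B|A,C|A)$ with assessment $[y,y]$, and that smaller instance again yields $z'=0$. The upper bound is treated symmetrically: set $z=1$, obtain the solvability condition $t(x+y-xy)\le x$, and either maximise $\lambda_1+\lambda_3$ (which gives $z''=(1-x)(1-y)+x/t$) or, via Step~3, get $z''=1$, the same $t=0$ recursion again producing $z''=1$.

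Finally I would assemble the cases: if $t=0$ then $[z',z'']=[0,1]$; if $t>0$ then $z'=\max\{0,\,xy-(1-t)(1-x)/t\}$ and $z''=\min\{1,\,(1-x)(1-y)+x/t\}$, the $\max$ and $\min$ exactly absorbing the two sign conditions encountered above. That the coherent values of $z$ fill precisely $[z',z'']$ (endpoints included, with no gaps) is guaranteed by the general correctness of Algorithm~2. The main obstacle I anticipate is twofold: the careful bookkeeping of Step~3 (identifying which $M_i$ vanish and hence whether a recursion is triggered), and, above all, the degenerate stratum $t=0$, where $A|(A\vee B)$ may have probability zero, the naive linear system degenerates, and one must correctly initiate and carry out the recursive call on the sub-family $(B|A,C|A)$. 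This is precisely the phenomenon that the coherence setting—rather than a purely measure-theoretic one—is built to handle, but executing it without slips is the delicate point of the argument.
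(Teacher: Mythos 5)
Your proposal is correct and follows essentially the same route as the paper: it applies Algorithm~2 of Biazzo--Gilio with the same six constituents, the same starting system, the same solvability conditions $t(1-x+xy)\le 1-x$ and $t(x+y-xy)\le x$, the same Step~2 optima yielding $xy-(1-t)(1-x)/t$ and $(1-x)(1-y)+x/t$, and the same Step~3 recursion onto $(B|A,C|A)$ in the degenerate case $t=0$. The only difference is that the paper carries out the linear-algebraic bookkeeping explicitly, whereas you describe it at the level of the algorithm's case structure; the content is the same.
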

\section{Weak transitivity involving (negated) defaults}
\label{SEC:WTIND}
By Remark \ref{REM:TOTCOHWT}, 
the p-consistent knowledge base ${(B\normally C, A\normally B)}$ neither  p-entails $A\normally C$ nor p-entails $A\nnormally C$. This will be denoted by ${(B\normally C, A\normally B) \nvDash_p A\normally C}$ and $(B\normally C, A\normally B) \nvDash_p A\nnormally C$, respectively. 
\begin{theorem} \label{THM:WT1}
$(B\normally C, A\normally B, A\vee B \nnormally \neg A) \models_p A\normally C$.
\end{theorem}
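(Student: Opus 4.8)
The plan is to reduce everything to Theorem~\ref{THM:PROPWT} after translating the knowledge base into an imprecise assessment. First I would rewrite $\mathcal{K}=(B\normally C, A\normally B, A\vee B\nnormally\neg A)$ in probabilistic terms using Table~\ref{Table:BS}: the two defaults force $p(C|B)=1$ and $p(B|A)=1$, while the negated default $A\vee B\nnormally\neg A$ forces $p(\neg A|A\vee B)<1$. By the conjugacy observation in Remark~\ref{REM:CONJUGACY}, this last condition is equivalent to $p(A|A\vee B)\in\, ]0,1]$. Hence the probabilistic representation $(\mathcal{F}_{\mathcal{K}},\mathcal{I}_{\mathcal{K}})$ of $\mathcal{K}$ can be taken to be $\mathcal{F}_{\mathcal{K}}=\big(C|B,B|A,A|(A\vee B)\big)$ together with $\mathcal{I}_{\mathcal{K}}=\{1\}\times\{1\}\times\, ]0,1]$, which is precisely the family appearing in Theorem~\ref{THM:PROPWT} under the constraint $x=y=1$, $t>0$.

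Next I would check p-consistency and identify the relevant set of coherent precise assessments. Since $A,B,C$ are logically independent, Remark~\ref{REM:TOTCOHWT} gives that the whole cube $[0,1]^3$ on $\mathcal{F}_{\mathcal{K}}$ is totally coherent; in particular every $(1,1,t)$ with $t\in\, ]0,1]$ is a coherent precise assessment belonging to $\mathcal{I}_{\mathcal{K}}$. Thus $\mathcal{I}_{\mathcal{K}}$ is g-coherent (indeed totally coherent), so $\mathcal{K}$ is p-consistent by Definition~\ref{DEF:PCONS}, and moreover $\Pi\cap\mathcal{I}_{\mathcal{K}}=\mathcal{I}_{\mathcal{K}}$, i.e.\ the set of coherent precise assessments in $\mathcal{I}_{\mathcal{K}}$ is exactly $\{(1,1,t):t\in\, ]0,1]\}$.

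Then I would simply evaluate the propagation bounds of Theorem~\ref{THM:PROPWT} at $x=y=1$, $t>0$. The lower bound becomes $z'=\max\{0,\,1\cdot 1-(1-t)(1-1)/t\}=\max\{0,1\}=1$ and the upper bound becomes $z''=\min\{1,\,(1-1)(1-1)+1/t\}=\min\{1,1/t\}=1$, the last equality because $1/t\ge 1$ for $t\in\, ]0,1]$. Hence for every admissible $t$ the interval $[z',z'']$ of coherent extensions of $(1,1,t)$ to $C|A$ is the degenerate interval $[1,1]=\{1\}$. Taking the union of these intervals over all $t\in\, ]0,1]$, the set $\Sigma$ of coherent extensions of $\mathcal{I}_{\mathcal{K}}$ on $\mathcal{F}_{\mathcal{K}}$ to $C|A$ equals $\{1\}$, which by Definition~\ref{DEF:ENTAIL} is exactly the statement $\mathcal{K}\models_p A\normally C$.

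There is essentially no serious obstacle here: the only points requiring a little care are (i) correctly converting the negated default into the half-open assessment $]0,1]$ via Remark~\ref{REM:CONJUGACY}, so that the conditioning event $A|(A\vee B)$ matches the one used in Theorem~\ref{THM:PROPWT}, and (ii) making sure that one forms the union of the extension intervals over \emph{all} coherent assessments in $\mathcal{I}_{\mathcal{K}}$ (i.e.\ over all $t\in\, ]0,1]$), rather than for a single one; but since each such interval already equals $\{1\}$, the union is trivially $\{1\}$.
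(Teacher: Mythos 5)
Your proposal is correct and follows essentially the same route as the paper's own proof: translate the knowledge base via Remark~\ref{REM:CONJUGACY} into the assessment $\{1\}\times\{1\}\times\,]0,1]$ on $\big(C|B,B|A,A|(A\vee B)\big)$, invoke Remark~\ref{REM:TOTCOHWT} for p-consistency, specialize the bounds of Theorem~\ref{THM:PROPWT} at $x=y=1$, $t>0$ to get $[z',z'']=[1,1]$, and take the union over all $t$ via Equation~(\ref{EQ:UEXT}). The computations are right and nothing is missing.
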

\begin{proof}
By   Remark~\ref{REM:TOTCOHWT}, the knowledge base $\mathcal{K}=(B\normally C, A\normally B, A\vee B \nnormally \neg A)$ is p-consistent. 
Based on  Remark~\ref{REM:CONJUGACY},
we set $\mathcal{I}_{\mathcal{K}}=\{1\}\times\{1\}\times{]0,1]}$ and  $\mathcal{F}_{\mathcal{K}}=\big (C|B,B|A,A|(A\vee B)\big)$.  Let $\mathcal{P}$ be any precise coherent  assessment on $\mathcal{F}_{\mathcal{K}}$ such that $\mathcal{P}\in \mathcal{I}_{\mathcal{K}}$, i.e., $\mathcal{P}=(1,1,t)$, with $t\in]0,1]$. From Theorem~\ref{THM:PROPWT},  the interval of  coherent extensions from $\mathcal{P}$  on $\mathcal{F}_{\mathcal{K}}$ to $C|A$ is $[z_{\mathcal{P}}',z_{\mathcal{P}}'']=[1,1]$. Then, by Equation (\ref{EQ:UEXT}), the set of coherent extensions to $C|A$ from $\mathcal{I}_{\mathcal{K}}$ on $\mathcal{F}_{\mathcal{K}}$ is
$
\bigcup_{\mathcal{P} \in \mathcal{I}_{\mathcal{K}}}  [z_{\mathcal{P}}',z_{\mathcal{P}}'']=[1,1]\,
$\,.
\qed
\end{proof}
\begin{theorem}\label{THM:AII}
$(B\normally C, A\nnormally \neg B, A\vee B \nnormally \neg A) \models_p A\nnormally \neg C$.
\end{theorem}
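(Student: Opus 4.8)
The plan is to reduce the statement to Theorem~\ref{THM:PROPWT} via the probabilistic representation of the knowledge base. First I would translate $\mathcal{K}=(B\normally C,\ A\nnormally \neg B,\ A\vee B\nnormally \neg A)$ using Table~\ref{Table:BS}: the default $B\normally C$ is a type~I sentence, contributing the assessment $\{1\}$ on $C|B$; the negated default $A\nnormally \neg B$ is a type~IV sentence (with $E=B$, $H=A$), contributing $]0,1]$ on $B|A$; and $A\vee B\nnormally \neg A$ is a type~IV sentence (with $E=A$, $H=A\vee B$), contributing $]0,1]$ on $A|(A\vee B)$. Hence $\mathcal{F}_{\mathcal{K}}=\big(C|B,\ B|A,\ A|(A\vee B)\big)$ and $\mathcal{I}_{\mathcal{K}}=\{1\}\times{]0,1]}\times{]0,1]}$. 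By Remark~\ref{REM:TOTCOHWT} the assessment $[0,1]^3$ on $\mathcal{F}_{\mathcal{K}}$ is totally coherent, so in particular $(1,1,1)\in\mathcal{I}_{\mathcal{K}}$ is coherent; thus $\mathcal{I}_{\mathcal{K}}$ is g-coherent and $\mathcal{K}$ is p-consistent (Definition~\ref{DEF:PCONS}), and moreover $\Pi\cap\mathcal{I}_{\mathcal{K}}=\mathcal{I}_{\mathcal{K}}$.

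Next I would apply Theorem~\ref{THM:PROPWT} to each coherent precise assessment $\mathcal{P}=(x,y,t)=(1,y,t)\in\mathcal{I}_{\mathcal{K}}$, i.e.\ with $y,t\in{]0,1]}$. Since $t>0$ and $x=1$ (so $1-x=0$), the lower bound collapses to $z'_{\mathcal{P}}=\max\{0,\,xy-(1-t)(1-x)/t\}=\max\{0,y\}=y$, and the upper bound collapses to $z''_{\mathcal{P}}=\min\{1,\,(1-x)(1-y)+x/t\}=\min\{1,1/t\}=1$. Hence the interval of coherent extensions $z=p(C|A)$ of $\mathcal{P}$ is $[z'_{\mathcal{P}},z''_{\mathcal{P}}]=[y,1]\subseteq{]0,1]}$.

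Finally, by Equation~(\ref{EQ:UEXT}) the set of coherent extensions to $C|A$ of $\mathcal{I}_{\mathcal{K}}$ on $\mathcal{F}_{\mathcal{K}}$ is $\bigcup_{(1,y,t)\in\mathcal{I}_{\mathcal{K}}}[y,1]={]0,1]}$, so the corresponding set of coherent extensions to $\neg C|A$ is $\{1-z:z\in{]0,1]}\}=[0,1[\ \subseteq\ [0,1[$. By Definition~\ref{DEF:ENTAIL} this is precisely the condition $\mathcal{K}\models_p A\nnormally\neg C$. (Equivalently, one could invoke the subsequence characterization and observe that already the two-element subsequence $(B\normally C,\ A\nnormally\neg B)$ forces $p(C|A)\ge y>0$, hence $p(\neg C|A)<1$.) I do not expect a genuine obstacle here: the only points needing care are getting the conjugate conditional events right --- in particular that the conclusion concerns $\neg C|A$ rather than $C|A$ --- and noticing that the hypothesis $x=1$ makes the $\max$ and $\min$ in Theorem~\ref{THM:PROPWT} degenerate so that the propagated lower bound on $p(C|A)$ is uniformly strictly positive.
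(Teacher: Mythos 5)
Your main argument is correct and is essentially identical to the paper's own proof: represent $\mathcal{K}$ by $\mathcal{I}_{\mathcal{K}}=\{1\}\times{]0,1]}\times{]0,1]}$ on $(C|B,B|A,A|(A\vee B))$, use Remark~\ref{REM:TOTCOHWT} for p-consistency, apply Theorem~\ref{THM:PROPWT} to each $(1,y,t)$ with $y,t\in{]0,1]}$ to get the extension interval $[y,1]$ for $C|A$, take the union $]0,1]$ via Equation~(\ref{EQ:UEXT}), and pass to $\neg C|A$ to obtain $[0,1[$.

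However, your parenthetical alternative is wrong: the two-element subsequence $(B\normally C,\ A\nnormally\neg B)$ does \emph{not} force $p(C|A)\geq y$. By Remark~\ref{REM:TOTCOHWT} the assessment $[0,1]^3$ on $(C|B,B|A,C|A)$ is totally coherent, so $(1,y)$ on $(C|B,B|A)$ extends coherently to any $z=p(C|A)\in[0,1]$, including $z=0$; this is exactly the probabilistic non-informativeness of Transitivity stressed in the introduction and at the start of Section~\ref{SEC:WTIND}. The bound $z'=y$ in Theorem~\ref{THM:PROPWT} is only available because $t>0$, i.e.\ because the third premise $A\vee B\nnormally\neg A$ is present --- dropping it puts you in the $t=0$ case where $[z',z'']=[0,1]$. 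Since this aside is not load-bearing, the proof stands, but the claim should be deleted.
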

\begin{proof}
By   Remark~\ref{REM:TOTCOHWT}, the knowledge base $\mathcal{K}=(B\normally C, A\nnormally \neg B, A\vee B \nnormally \neg A)$ is p-consistent. 
Based on  Remark~\ref{REM:CONJUGACY},
we set $\mathcal{I}_{\mathcal{K}}=\{1\}\times{]0,1]}\times{]0,1]}$ and  $\mathcal{F}_{\mathcal{K}}=\big (C|B,B|A,A|(A\vee B)\big)$.  Let $\mathcal{P}$ be any precise coherent  assessment on $\mathcal{F}_{\mathcal{K}}$ such that $\mathcal{P}\in \mathcal{I}_{\mathcal{K}}$, i.e., $\mathcal{P}=(1,y,t)$, with $y\in]0,1]$ and  $t\in]0,1]$. From Theorem~\ref{THM:PROPWT},  the interval of  coherent extensions from $\mathcal{P}$  on $\mathcal{F}_{\mathcal{K}}$ to $C|A$ is $[z_{\mathcal{P}}',z_{\mathcal{P}}'']=[y,1]$. Then, by Equation (\ref{EQ:UEXT}), the set of coherent extensions to $C|A$ from $\mathcal{I}_{\mathcal{K}}$ on $\mathcal{F}_{\mathcal{K}}$ is
$
\bigcup_{\mathcal{P} \in \mathcal{I}_{\mathcal{K}}}  [z_{\mathcal{P}}',z_{\mathcal{P}}'']=
\bigcup_{(y,t)\in]0,1]\times]0,1]} [y,1]=
]0,1]\,
$\,. Therefore,  the set of coherent extensions on $\neg C|A$ is $[0,1[$.
\qed
\end{proof}
\begin{theorem}\label{THM:WT4}
 $(B\normally C, A\normally  B,  B \nnormally \neg A) \models_p A\normally C$.
\end{theorem}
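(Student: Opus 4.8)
The plan is to reduce Theorem~\ref{THM:WT4} to Theorem~\ref{THM:PROPWT} in the same manner as Theorems~\ref{THM:WT1} and \ref{THM:AII}, but the new feature is that the third default, $B \nnormally \neg A$, is a constraint on $P(\neg A|B)$ rather than on $P(A|A\vee B)$, so the knowledge base is \emph{not} literally an assessment on the family $\mathcal{F}=(C|B,B|A,A|(A\vee B))$ used in Theorem~\ref{THM:PROPWT}. First I would record, via Remark~\ref{REM:CONJUGACY} (Table~\ref{Table:BS}), that $B\normally C$ means $P(C|B)=1$, $A\normally B$ means $P(B|A)=1$, and $B\nnormally\neg A$ means $P(\neg A|B)\neq 1$, i.e.\ $P(A|B)>0$; so the knowledge base $\mathcal{K}=(B\normally C, A\normally B, B\nnormally\neg A)$ is represented by the family $\mathcal{F}_{\mathcal{K}}=(C|B,B|A,A|B)$ with $\mathcal{I}_{\mathcal{K}}=\{1\}\times\{1\}\times{]0,1]}$ (after taking $A|B$ in place of $\neg A|B$, with its set $]0,1]$).

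The key step is then a bridging lemma: under the hypotheses $P(C|B)=1$ and $P(B|A)=1$, the condition $P(A|B)>0$ forces $P(A|A\vee B)>0$, so that the assessment on $\mathcal{F}_{\mathcal{K}}$ can be transferred to a coherent assessment $(1,1,t)$ on $\mathcal{F}=(C|B,B|A,A|(A\vee B))$ with $t\in{]0,1]}$, and conversely. I would argue this through the constituents: when $P(B|A)=1$, coherence kills the constituent $A\neg B$, so the only constituents inside $A\vee B$ with positive mass are $AB$ and $\neg AB$; then $P(A|A\vee B)=P(AB)/(P(AB)+P(\neg AB))$ and $P(A|B)=P(AB)/P(B)=P(AB)/(P(AB)+P(\neg AB))$ agree, hence $P(A|A\vee B)>0 \iff P(A|B)>0$. (One must handle the degenerate sub-cases where $P(B)=0$ or $P(A\vee B)=0$ carefully, but p-consistency of $\mathcal{K}$, which itself follows from Remark~\ref{REM:TOTCOHWT} applied to the relevant sub-family, guarantees a witnessing coherent assessment exists.) Once the bridge is in place, for each coherent $\mathcal{P}=(1,1,t)\in\mathcal{I}_{\mathcal{K}}$ with $t>0$, Theorem~\ref{THM:PROPWT} gives $[z'_{\mathcal{P}},z''_{\mathcal{P}}]=[\max\{0,1\cdot 1-(1-t)(1-1)/t\},\min\{1,(1-1)(1-1)+1/t\}]=[1,1]$, and then Equation~(\ref{EQ:UEXT}) yields that the set of coherent extensions to $C|A$ is $\bigcup_{\mathcal{P}\in\mathcal{I}_{\mathcal{K}}}[z'_{\mathcal{P}},z''_{\mathcal{P}}]=\{1\}$, which by Definition~\ref{DEF:ENTAIL} is exactly $\mathcal{K}\models_p A\normally C$.

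The main obstacle I expect is the bridging step, i.e.\ showing the equivalence between an assessment on $(C|B,B|A,A|B)$ and one on $(C|B,B|A,A|(A\vee B))$ under $P(B|A)=1$ and, more delicately, verifying that this equivalence respects \emph{coherence} of the full four-event assessment including the conclusion $C|A$ (so that the interval of extensions is genuinely unchanged). A clean way around the degenerate cases is to invoke Theorem~2 in the form of the earlier lemma on sub-sequences: since any coherent precise $\mathcal{P}$ on $\mathcal{F}_{\mathcal{K}}$ with $P(A|B)>0$ necessarily has $P(B)>0$ and hence (using $P(B|A)=1$) $P(A\vee B)>0$, we can always read off $t=P(A|A\vee B)>0$; conversely any $t\in{]0,1]}$ on $\mathcal{F}$ with $P(C|B)=P(B|A)=1$ comes from a coherent assessment with $P(A|B)=t>0$. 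Alternatively, and perhaps more in the spirit of the surrounding proofs, one could rerun the Algorithm~2 computation directly with input family $(C|B,B|A,A|B,C|A)$; but reusing Theorem~\ref{THM:PROPWT} via the bridge is shorter, so that is the route I would take.
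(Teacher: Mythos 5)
Your proposal is correct and follows essentially the same route as the paper: both reduce the claim to Theorem~\ref{THM:WT1} (hence to Theorem~\ref{THM:PROPWT}) by showing that, given $p(B|A)=1$, the premise $p(A|B)>0$ forces $p(A|(A\vee B))>0$ in any coherent extension, and then rule out a coherent extension with $z<1$ by a \emph{reductio} on the enlarged family. The only real difference is the bridging step: the paper invokes the Goodman--Nguyen inclusion $A|B\subseteq A|(A\vee B)$, which yields $p(A|B)\leq p(A|(A\vee B))$ unconditionally and so sidesteps the zero-probability case analysis you flag as delicate, whereas you derive equality of the two conditional probabilities by a constituent argument; both bridges are valid.
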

\begin{proof}
It can be shown that the assessment $[0,1]^3$ on $(C|B,B|A,A|B)$ is totally coherent. Then, $\mathcal{K}=(B\normally C, A\normally  B,  B \nnormally \neg A)$ is p-consistent. 
We set $\mathcal{I}_{\mathcal{K}}=\{1\}\times\{1\}\times{]0,1]}$ and  $\mathcal{F}_{\mathcal{K}}=\big (C|B,B|A,A|B\big)$.
We observe that  $A|B \subseteq  A|(A\vee B)$, where the binary relation $\subseteq$ denotes the well-known Goodman and Nguyen inclusion relation between conditional events (e.g., \cite{gilio13}). 
Coherence requires that $p(A|B)\leq p(A|(A \vee B))$.
Let $\mathcal{P}$ be any precise coherent  assessment on $\mathcal{F}_{\mathcal{K}}$ such that $\mathcal{P}\in \mathcal{I}_{\mathcal{K}}$, i.e., $\mathcal{P}=(1,1,w)$, with  $w\in\;]0,1]$.  Thus, for any coherent extension  $\mathcal{P}'=(1,1,w,t)$ of $\mathcal{P}$ on $(\mathcal{F}_{\mathcal{K}},A|(A \vee B))$,  it holds that $0<w\leq t$.  Then, $\mathcal{K}'=(B\normally C, A\normally  B,  B \nnormally \neg A, A\vee B \nnormally \neg A)$ is p-consistent. Thus,  by  Theorem \ref{THM:WT1}, $\mathcal{K}' \models_p A\normally C$. Then,  for every coherent extension $\mathcal{P}''=(1,1,w,t,z)$ of $\mathcal{P}'$ on $(\mathcal{F}_{\mathcal{K'}},C|A)$ it holds  that $z=1$.
By \emph{reductio ad absurdum}, if for some $z<1$ the extension $(1,1,w,z)$  on  $(\mathcal{F}_{\mathcal{K}},C|A)$ of $\mathcal{P}\in \mathcal{I}_{\mathcal{K}}$ on $\mathcal{F}_{\mathcal{K}}$ were coherent, then---with  $0<w\leq t$ and $z<1$---the assessment $(1,1,w,t,z)$ on $(\mathcal{F}_{\mathcal{K'}},C|A)$ would be coherent, which contradicts the conclusion $z=1$ above. Thus, for every  coherent extension $(1,1,w,z)$  of  $\mathcal{P}\in \mathcal{I}_{\mathcal{K}}$ on  $(\mathcal{F}_{\mathcal{K}},C|A)$ it holds that $z=1$.\qed
\end{proof}
\begin{theorem}\label{THM:WT5}
$(B\normally C, A\nnormally \neg B,  B \nnormally \neg A) \models_p A\nnormally \neg C$.
\end{theorem}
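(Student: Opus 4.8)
The plan is to mirror the structure of the proof of Theorem~\ref{THM:WT4}, but starting from the conclusion of Theorem~\ref{THM:AII} instead of Theorem~\ref{THM:WT1}. First I would note that $[0,1]^3$ on $(C|B,B|A,A|B)$ is totally coherent (the same fact invoked in Theorem~\ref{THM:WT4}), so $\mathcal{K}=(B\normally C, A\nnormally \neg B, B\nnormally \neg A)$ is p-consistent. By Remark~\ref{REM:CONJUGACY} we may represent $\mathcal{K}$ by $\mathcal{I}_{\mathcal{K}}=\{1\}\times{]0,1]}\times{]0,1]}$ on $\mathcal{F}_{\mathcal{K}}=\big(C|B,B|A,A|B\big)$, so a generic coherent $\mathcal{P}\in\mathcal{I}_{\mathcal{K}}$ has the form $\mathcal{P}=(1,y,w)$ with $y,w\in{]0,1]}$.

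Next I would use the Goodman--Nguyen inclusion $A|B\subseteq A|(A\vee B)$ (already used in the proof of Theorem~\ref{THM:WT4}), which forces $p(A|B)\le p(A|(A\vee B))$ by coherence. Hence any coherent extension $\mathcal{P}'=(1,y,w,t)$ of $\mathcal{P}$ to $\big(\mathcal{F}_{\mathcal{K}},A|(A\vee B)\big)$ satisfies $0<w\le t\le 1$; in particular $t\in{]0,1]}$, so $\mathcal{P}'\in\{1\}\times{]0,1]}\times{]0,1]}$ read on $\big(C|B,B|A,A|B,A|(A\vee B)\big)$. Therefore $\mathcal{K}'=(B\normally C, A\nnormally \neg B, B\nnormally \neg A, A\vee B\nnormally \neg A)$ is p-consistent, and by Theorem~\ref{THM:AII} we have $\mathcal{K}'\models_p A\nnormally\neg C$; equivalently, every coherent extension $\mathcal{P}''=(1,y,w,t,z)$ of $\mathcal{P}'$ to $C|A$ has $z<1$.

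Finally I would close by \emph{reductio ad absurdum}, exactly as in Theorem~\ref{THM:WT4}: suppose some coherent extension $(1,y,w,z)$ of $\mathcal{P}$ on $\big(\mathcal{F}_{\mathcal{K}},C|A\big)$ had $z=1$. Pick any $t$ with $w\le t\le 1$ (such a $t$ exists and yields a coherent $\mathcal{P}'$ by the inclusion argument). Then the assessment $(1,y,w,t,z)$ on $\big(\mathcal{F}_{\mathcal{K}'},C|A\big)$ would be coherent with $z=1$, contradicting $\mathcal{K}'\models_p A\nnormally\neg C$. Hence every coherent extension to $C|A$ of every $\mathcal{P}\in\mathcal{I}_{\mathcal{K}}$ satisfies $z<1$, i.e.\ $z\in[0,1[$, and therefore $\mathcal{K}\models_p A\nnormally\neg C$. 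The one step needing a little care is checking that given a coherent $\mathcal{P}'=(1,y,w,t)$ on the enlarged family the further extension to $C|A$ is still governed by Theorem~\ref{THM:PROPWT} with $x=1$: there $[z',z'']=[\max\{0,y\},\min\{1,0+1/t\}]=[y,1]$ for $t>0$, so $z=1$ is indeed attainable in the four-variable setting but is the \emph{only} value forbidden by the type-III reading of the conclusion — this is precisely what makes the contradiction bite, and it is the point I would state explicitly rather than leave to the reader.
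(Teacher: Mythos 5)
Your overall strategy is the right one and matches the paper's intent (the paper's own proof is literally ``exploits Theorem~\ref{THM:AII} and is similar to the proof of Theorem~\ref{THM:WT4}''): establish p-consistency via total coherence of $[0,1]^3$ on $(C|B,B|A,A|B)$, use the Goodman--Nguyen inclusion $A|B\subseteq A|(A\vee B)$ to force $0<w\le t$ for any coherent extension to $A|(A\vee B)$, and then transfer the conclusion of Theorem~\ref{THM:AII} back down by \emph{reductio}. However, there is a genuine error in the key step: you have negated the wrong endpoint. The conclusion $A\nnormally\neg C$ is a type~IV sentence with respect to $C|A$ (Table~\ref{Table:BS}): it asserts $p(\neg C|A)<1$, i.e.\ $p(C|A)>0$, so the value of $z=p(C|A)$ that must be excluded is $z=0$, not $z=1$. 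Your claim that Theorem~\ref{THM:AII} yields ``every coherent extension $(1,y,w,t,z)$ to $C|A$ has $z<1$'' is false: the proof of Theorem~\ref{THM:AII} shows that the coherent extensions of $(1,y,t)$ to $C|A$ fill the interval $[y,1]$, so $z=1$ is attainable (and indeed $(B\normally C, A\nnormally\neg B, B\nnormally\neg A)\nvDash_p A\nnormally C$). Consequently your \emph{reductio}, which assumes $z=1$ and purports to derive a contradiction, cannot succeed; at best it is an attempt to prove the different (and here false) claim that all extensions satisfy $z<1$.

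The fix is mechanical. Assume for \emph{reductio} that some extension $(1,y,w,0)$ of $\mathcal{P}=(1,y,w)$ on $(\mathcal{F}_{\mathcal{K}},C|A)$ is coherent; by the fundamental theorem it extends coherently to $A|(A\vee B)$ with some value $t$, and the Goodman--Nguyen inclusion forces $t\ge w>0$; the resulting coherent assessment $(1,y,w,t,0)$ contradicts Theorem~\ref{THM:AII}, whose content is precisely that every coherent extension of $(1,y,t)\in\{1\}\times{]0,1]}\times{]0,1]}$ to $C|A$ lies in $]0,1]$ (equivalently, the extensions to $\neg C|A$ lie in $[0,1[$). Hence $z>0$ for every coherent extension, i.e.\ the set of coherent extensions to $\neg C|A$ is contained in $[0,1[$, which is exactly $\mathcal{K}\models_p A\nnormally\neg C$. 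Your closing remark that ``$z=1$ is the only value forbidden by the type-III reading of the conclusion'' is the symptom of the confusion: the conclusion is of type~IV when read on $C|A$ (type~III only on $\neg C|A$), and the forbidden value of $p(C|A)$ is $0$.
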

\begin{proof}
The proof exploits Theorem~\ref{THM:AII} and is similar to the proof of Theorem~\ref{THM:WT4}.
\end{proof}
\section{Concluding remarks}\vspace{-1em}
Our definition of negated defaults, based on imprecise probabilities (Sect.~\ref{SEC:ProbKBENT}), can be
seen as an instance of the \emph{wide-scope} reading of the negation
of a conditional. It offers an interesting alternative to the
\emph{narrow-scope} reading, where a conditional is negated by
negating its consequent \cite{pfeifer12x}. Moreover, we note that
Theorem~\ref{THM:WT1} can be seen as a modern formalization of the
classical (Aristotelian) syllogistic Modus Barbara (with ${A\vee B
\nnormally \neg A}$, i.e. $P(A|(A\vee B))>0$, as an existential import assumption). Likewise,
Theorem~\ref{THM:AII} as Modus Darii. A different (stronger) existential import
assumption ($B\nnormally \neg A$, i.e. $P(A|B)>0$) for Modus Barbara and Modus Darii is considered in
theorems~\ref{THM:WT4} and~\ref{THM:WT5}, respectively. We are
currently working on a coherence-based probability
semantics for classical syllogisms, where we exploit ideas presented above.

\end{document}